\newcommand{\mcm}[3]{\newcommand{#1}[#2]{{\ensuremath{#3}}}} 
\mcm{\tuple}{1}{\langle #1 \rangle}
\mcm{\name}{1}{\ulcorner #1 \urcorner}
\mcm{\Nbb}{0}{\mathbb{N}}
\mcm{\Zbb}{0}{\mathbb{Z}}
\mcm{\Rbb}{0}{\mathbb{R}}
\mcm{\Cbb}{0}{\mathbb{C}}
\mcm{\Fbb}{0}{\mathbb{F}}
\mcm{\Bcal}{0}{\cal B}
\mcm{\Ccal}{0}{\cal C}
\mcm{\Dcal}{0}{\cal D}
\mcm{\Ecal}{0}{\cal E}
\mcm{\Fcal}{0}{\cal F}
\mcm{\Gcal}{0}{\cal G}
\mcm{\Hcal}{0}{\cal H}
\mcm{\Ical}{0}{\cal I}
\mcm{\Lcal}{0}{\cal L}
\mcm{\Mcal}{0}{\cal M}
\mcm{\Ncal}{0}{\cal N}
\mcm{\Pcal}{0}{{\cal P}}
\mcm{\Scal}{0}{{\cal S}}
\mcm{\Tcal}{0}{{\cal T}}
\mcm{\Ucal}{0}{{\cal U}}
\mcm{\Vcal}{0}{{\cal V}}
\mcm{\Wcal}{0}{{\cal W}}
\mcm{\Ycal}{0}{{\cal Y}}
\mcm{\Mfrak}{0}{\mathfrak M}
\mcm{\restric}{0}{\upharpoonright}
\mcm{\upset}{0}{\uparrow}
\mcm{\onto}{0}{\twoheadrightarrow}
\mcm{\smallNbb}{0}{{\small \mathbb{N}}}
\DeclareMathOperator{\preop}{op}
\mcm{\op}{0}{^{\preop}}
\newcommand{\theoremize}[2]{\newaliascnt{#1}{thm} \newtheorem{#1}[#1]{#2} \aliascntresetthe{#1}}
\theoremstyle{plain}
\newtheorem{thm}{Theorem}[section]
\theoremstyle{definition}
\theoremstyle{plain}
\title{\scshape An excluded minors method for infinite matroids}
\author{Nathan Bowler \and Johannes Carmesin}
\begin{document}
 
\maketitle

 \begin{abstract}
The notion of thin sums matroids was invented to extend the notion of representability to non-finitary matroids. A matroid is tame if every circuit-cocircuit intersection is finite. We prove that a tame matroid is a thin sums matroid over a finite field $k$ if and only if all its finite minors are representable over $k$.

We expect that the method we use to prove this will make it possible to lift many theorems about finite matroids representable over a finite field to theorems about tame thin sums matroids over these fields. We give three examples of this: various characterisations of binary tame matroids and of regular tame matroids, and unique representability of ternary tame matroids. 
\end{abstract}

\section{Introduction}

Given a family of vectors in a vector space over some field $k$, we get a matroid structure on that family whose independent sets are given by the linearly independent subsets of the family. Matroids arising in this way are called {\em representable} matroids over $k$. A classical theorem of Tutte \cite{MR0101526} states that a finite matroid is binary (that is, representable over $\Fbb_2$) if and only if it does not have $U_{2,4}$ as a minor. In the same spirit, a key aim of finite matroid theory has been to determine such `forbidden minor' characterisations for the classes of matroids representable over other finite fields. For example Bixby and Seymour \cite{{bixby:gf3},{seymour:gf3}} characterized the finite ternary matroids (those representable over $\Fbb_3$) by forbidden minors, and more recently there is a forbidden minors characterisation for the finite matroids representable over $\Fbb_4$, due to Geelen, Gerards and Kapoor \cite{gf4}. This remains an open problem for all other finite fields. In this 
paper, we consider the 
corresponding problem for infinite matroids.

It is clear that any representable matroid is \emph{finitary}, that is, all its
circuits are finite, and so many interesting examples of infinite matroids are
not representable. However, since the construction of many standard examples,
including the algebraic cycle matroids of infinite graphs, is suggestively
similar to that of representable matroids, the notion of {\em thin sums
matroids} was introduced in \cite{RD:HB:graphmatroids}: it is a generalisation
of representability which captures these infinite examples. We will work with
thin sums matroids rather than with representable matroids.

In \cite{THINSUMS} it was shown that the class of tame thin sums matroids over a fixed field is closed under  
duality, where a matroid is \emph{tame} if any circuit-cocircuit intersection is finite.
On the other hand, there are thin sums matroids whose dual is not a thin sums matroid \cite{BC:wild_matroids} - such counterexamples cannot be tame. A simple consequence of this closure under duality is that the class of tame thin sums matroids over a fixed field is closed under taking minors, and so we may consider the forbidden minors for this class. 

Minor closed classes may have infinite `minimal' forbidden minors. For example
the class of finitary matroids has the
infinite circuit $U_{1,\Nbb}^*$ as a forbidden minor. Similarly, the class of
tame thin sums matroids over $\Rbb$ has $U_{2, \Pcal (\Rbb)}$ as a forbidden
minor. However, our main result states that the class of tame thin sums matroids
over a fixed {\em finite} field has only finite minimal forbidden minors.

\begin{thm}\label{exmin}
Let $M$ be a tame matroid and $k$ be a finite field. 
Then $M$ is a thin sums matroid over $k$ if and only if
every finite minor of $M$ is $k$-representable.
\end{thm}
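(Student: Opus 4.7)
One direction is immediate: minors of thin sums matroids over $k$ are again thin sums matroids over $k$ (using the minor-closure stated in the excerpt as a consequence of the duality result of \cite{THINSUMS}), and for finite matroids thin sums representability coincides with ordinary $k$-representability.

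For the converse I would proceed by a compactness argument made possible by the finiteness of $k$. Fix a basis $B$ of $M$ and try to build a thin sums representation $\phi\colon E\to k^{B}$ by setting $\phi_{b}:=\chi_{\{b\}}$ for $b\in B$ and choosing, for each $e\in E\setminus B$, some $\phi_{e}\in k^{B}$ with support contained in $C(e,B)\cap B$. The role of tameness is precisely to make such a $\phi$ automatically thin: for every circuit $C$ of $M$ and every $b\in B$, the ``column'' $\{e\in C:\phi_{e}(b)\neq 0\}$ is contained in $C\cap C^{*}(b,B)$, which is finite because $M$ is tame.

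The space $X$ of candidate assignments $\phi$ is a product of copies of the finite discrete space $k$, hence compact by Tychonoff. To each finite minor $N$ of $M$ the hypothesis supplies a $k$-representation; after fixing gauge (mapping the residues of the $B$-elements appearing in $N$ to standard basis vectors) this representation pins down finitely many coordinates of $\phi$, picking out a clopen $X_{N}\subseteq X$. The family $\{X_{N}\}$ should have the finite intersection property because any finite collection of finite minors of $M$ is contained in a single finite minor, whose representation simultaneously satisfies all the relevant constraints. Any element of $\bigcap_{N}X_{N}$ is then the sought representation.

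The main obstacle is the gauge-fixing step together with the passage from ``agrees on all finite minors'' to ``represents $M$'': finite $k$-representations are only unique up to rescaling each element (and field automorphism), so the individual $X_{N}$'s need to be selected in a mutually compatible way before the intersection can be forced to be non-empty. I expect this to require either a preparatory K\"onig- or ultrafilter-style choice of a coherent family of local representations, or building the gauge into $X$ as extra coordinates. Once a coherent $\phi$ is produced, tameness will have to be invoked a second time to promote agreement on finite minors to agreement on all (possibly infinite) circuits and cocircuits of $M$, so that $\phi$ genuinely represents $M$ and not merely its finite shadows.
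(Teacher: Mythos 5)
Your easy direction is fine, and you have correctly guessed the general shape of the argument (Tychonoff compactness over a product of finite discrete spaces, with tameness guaranteeing finiteness of the relevant constraints). But the step you yourself flag as ``the main obstacle'' is precisely the crux, and it is not resolved in your sketch; moreover it is exactly the point where your choice of compactness space breaks down. Running compactness over candidate assignments $\phi\colon E\to k^{B}$ forces you to make the representations of different finite minors cohere up to a single global gauge, and there is no evident way to do this: representations of a finite minor are only determined up to rescalings, the clopen sets $X_N$ you describe are not canonically defined, and the constraint imposed by a finite minor $N=M/C\backslash D$ is not a condition on finitely many coordinates of $\phi$ at all, since recovering $N$ from $\phi$ involves quotienting by the span of the (possibly infinite) contracted part $C$. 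Your justification of the finite intersection property -- that finitely many finite minors are all minors of one common finite minor of $M$ -- is also unsubstantiated: different finite minors arise from incompatible contraction/deletion choices, and nothing guarantees a common finite minor accommodating them all. Finally, even granted a point in the intersection, your last paragraph (promoting ``agrees on finite shadows'' to ``represents $M$'') is left as a hope rather than an argument.

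The paper sidesteps all of these problems by changing the object to which compactness is applied. It introduces the notion of a $k$-\emph{painting}: functions $c_o\colon o\to k^*$ for every circuit and $d_b\colon b\to k^*$ for every cocircuit satisfying $\sum_{e\in o\cap b}c_o(e)d_b(e)=0$ for every circuit--cocircuit pair. Each such constraint involves only the finitely many coordinates in $o\cap b$ (tameness), so it defines a closed subset $C_{o,b}$ of the compact space $(k^*)^H$, where $H$ indexes pairs (circuit, element) and (cocircuit, element). Because every circuit and cocircuit carries its own scaling freedom, no gauge-fixing or coherence between different finite minors is ever needed: to verify the finite intersection property one only has to produce, for finitely many pairs $(o,b)$, a \emph{single} finite minor $M'$ in which suitable circuits $o'\subseteq o$ and cocircuits $b'\subseteq b$ survive with the same traces on the finite set $F$ of relevant edges -- this is the content of the paper's Lemma on finite minors, and it requires a careful construction (auxiliary circuits and cocircuits chosen via the two-element intersection lemma, plus the scrawl/coscrawl machinery), not merely taking any finite minor containing $F$. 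A representation of $M'$ then yields a painting of $M'$ (the paper's Lemma on finite representable matroids), whose values populate the needed coordinates. Compactness gives a global painting, and a separate, explicit argument (the implication from paintability to thin-sums representability, with coordinates indexed by the cocircuits rather than by a basis, using the scrawl characterisation) converts it into a genuine thin sums representation -- this is the rigorous version of your final ``promotion'' step. So your proposal identifies the right tools but is missing the key idea (the painting as the object of compactness) that makes the gauge problem disappear, and the nontrivial finite-minor construction that underlies the finite intersection property.
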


The proof is by a compactness argument. All previous compactness proofs in infinite matroid theory known to the authors use only that either all finite restrictions or all finite contractions
have a certain property to conclude that the matroid itself has the desired property.
For our purposes, arguments of this kind must fail because there is a tame matroid all of whose finite restrictions and finite contractions are binary but which is not a thin sums matroid over $\Fbb_2$ - in fact, it has a $U_{2,4}$-minor. We shall briefly sketch how to construct such a matroid. Start with $U_{2,4}$, and add infinitely many edges parallel to one of its edges. This ensures that every finite contraction is binary. If we also add infinitely many edges which are parallel in the dual to some other edge then we guarantee in addition that all finite restrictions are binary, but the matroid itself has a $U_{2,4}$ minor.

Theorem \ref{exmin} implies that each of the excluded minor characterisations for finite representable matroids mentioned in the first paragraph extends to tame matroids. Thus, for example, a tame matroid is a thin sums matroid over $\Fbb_2$ if and only if it has no $U_{2,4}$ minor. Any future excluded minor characterisations for finite matroids representable over a fixed finite field will also immediately extend to tame matroids by this theorem.

We expect that our approach will make it possible to lift many other standard theorems about finite matroids representable over a finite field to theorems about tame thin sums matroids over the same field. We shall give four instances of this. Firstly, as for finite binary matroids, we obtain nine equivalent simple characterizations of tame thin sums matroids over $\Fbb_2$. Secondly, we show that a tame matroid is regular (that is, a thin sums matroid over every field) if and only if all its finite minors are, and that regularity is equivalent to signability for tame matroids (see Section 5 for a definition). Thirdly, we introduce an extension of the notion of uniqueness of representations for finite matroids to the infinite setting, and prove that tame thin sums matroids over $\Fbb_3$ have unique representations in this sense. A consequence of this is that the finite, algebraic and topological cycle matroids of a given graph each have a unique signing. We illustrate how these signings encode some of the 
structure of the graph. Fourthly, we show that the characterisations of representability over various sets of finite fields in terms of representability over partial fields extend in a uniform way to infinite tame matroids.

Our method applies to excluded minor characterisations of properties other than representability. In \cite{BCC:graphic_matroids} the same method is employed to show that the tame matroids all of whose finite minors are graphic are precisely those matroids that arise from some \emph{graph-like space}, in the sense that the circuits are given by  topological circles and the cocircuits by topological bonds.

The paper is organized as follows.
After the preliminaries in Section 2, in Section 3 we examine in detail the special case of binary matroids. Although this case is simpler than the general case, it illustrates some ideas that will be important in the proof of the main result. At the end of Section 3, we offer some related open problems.
In Section 4 we prove our main result. In Section 5 we discuss the implications
for regular and ternary matroids, and matroids representable over partial
fields.

\section{Preliminaries}

\subsection{Basics}

Throughout, notation and terminology for graphs are those of~\cite{DiestelBook10}, and for matroids those of~\cite{Oxley,matroid_axioms}.
$M$ always denotes a matroid and $E(M)$ (or just $E$), $\Ical(M)$ and $\Ccal(M)$ denote its ground 
set and its sets of independent sets and circuits, respectively.

A set system $\Ical\subseteq \Pcal(E)$ is the set of independent sets of a matroid iff it satisfies the following {\em independence  axioms\/} \cite{matroid_axioms}.
\begin{itemize}
	\item[(I1)] $\emptyset\in \Ical(M)$.
	\item[(I2)] $\Ical(M)$ is closed under taking subsets.
	\item[(I3)] Whenever $I,I'\in \Ical(M)$ with $I'$ maximal and $I$ not maximal, there exists an $x\in I'\setminus I$ such that $I+x\in \Ical(M)$.
	\item[(IM)] Whenever $I\subseteq X\subseteq E$ and $I\in\Ical(M)$, the set $\{I'\in\Ical(M)\mid I\subseteq I'\subseteq X\}$ has a maximal element.
\end{itemize}

A set system $\Ccal\subseteq \Pcal(E)$ is the set of circuits of a matroid iff it satisfies the following {\em circuit  axioms\/} \cite{matroid_axioms}.
\begin{itemize}
\item[(C1)] $\emptyset\notin\Ccal$.
\item[(C2)] No element of $\Ccal$ is a subset of another.
\item[ (C3)](Circuit elimination) Whenever $X\subseteq o\in \Ccal(M)$ and $\{o_x\mid x\in X\} \subseteq \Ccal(M)$ satisfies $x\in o_y\Leftrightarrow x=y$ for all $x,y\in X$, 
then for every $z \in o\setminus \left( \bigcup_{x \in X} o_x\right)$ there exists a  $o'\in \Ccal(M)$ such that $z\in o'\subseteq \left(o\cup  \bigcup_{x \in X} o_x\right) \setminus X$.

\item[(CM)] $\Ical$ satisfies (IM), where $\Ical$ is the set of those subsets of $E$ not including an element of $\Ccal$.
\end{itemize}


\begin{lem}\label{fdt}
 Let $M$ be a matroid and $s$ be a base.
Let $o_e$ and $b_f$ a fundamental circuit and a fundamental cocircuit with respect to $s$, then
\begin{enumerate}
 \item $o_e\cap b_f$ is empty or $o_e\cap b_f=\{e,f\}$ and
\item $f\in o_e$ iff $e\in b_f$.
\end{enumerate}
\end{lem}

\begin{proof}
To see the first note that $o_e\subseteq s+e$ and $b_f\subseteq (E\setminus s)+f$.
So $o_e\cap b_f\subseteq \{e,f\}$. As a circuit and a cocircuit can never meet in only one edge, the assertion follows.

To see the second, first let $f\in o_e$.
Then $f \in o_e \cap b_f$, so by (1) $o_e \cap b_f = \{e, f\}$ and so $e \in b_f$.
The converse implication is the dual statement of the above implication.
\end{proof}

\begin{lem}\label{o_cap_b}
 For any circuit $o$ containing two edges $e$ and $f$, there is a cocircuit $b$ such that $o\cap b=\{e,f\}$.
\end{lem}

\begin{proof}
As $o-e$ is independent, there is a base including $o-e$.
By \autoref{fdt}, the fundamental cocircuit of $f$ of this base intersects $o$ in $e$ and $f$, as desired. \end{proof}

\begin{lem} \label{rest_cir}
 Let $M$ be a matroid with ground set $E = C \dot \cup X \dot \cup D$ and let $o'$ be a circuit of $M' = M / C \backslash D$.
Then there is an $M$-circuit $o$ with $o' \subseteq o \subseteq o' \cup C$.
\end{lem}
\begin{proof}
Let $s$ be any $M$-base of $C$. Then $s \cup o'$ is $M$-dependent since $o'$ is $M'$-dependent.
On the other hand,  $s \cup o'-e$ is $M$-independent whenever $e\in o'$ since $o'-e$ is $M'$-independent.
Putting this together yields that $s \cup o'$ contains an $M$-circuit $o$, and this circuit must not avoid any $e\in o'$, as desired.
\end{proof}

\begin{cor}\label{ext_C_cap_D}
Let $M'$ be a minor of $M$. Further let $o'$ be an $M'$-circuit and $b'$ be an $M'$-cocircuit.
Then there is an $M$-circuit $o\subseteq o' \cup (E(M)\setminus E(M'))$
and an $M$-cocircuit $b\subseteq b'\cup (E(M)\setminus E(M'))$ such that $o\cap b=o'\cap b'$.
\end{cor}

\begin{proof}
 Extend $o'$ using contracted edges only and $b'$ using deleted edges only as in Lemma \ref{rest_cir} and its dual. 
\end{proof}

\subsection{Scrawls}

A \emph{scrawl} is a union of circuits. 
For any matroid $M$, $M$ can be recovered from its set of scrawls since the circuits are precisely the minimal nonempty scrawls.

We can formulate the matroid axioms in terms of scrawls. 
The scrawl axioms for a set $\Scal\subseteq \Pcal(E)$ are the following.

\begin{itemize}
	\item[(S1)] Any union of elements of $\Scal$ is in $\Scal$.
	\item[(S2)](scrawl elimination) Whenever $X\subseteq w\in \Scal$ and $\{w_x\mid x\in X\} \subseteq \Scal$ satisfies $x\in w_y\Leftrightarrow x=y$ for all $x,y\in X$, 
then for every $z \in w\setminus \left( \bigcup_{x \in X} w_x\right)$ there exists a  $w'\in \Scal$ such that $z\in w'\subseteq \left(w\cup  \bigcup_{x \in X} w_x\right) \setminus X$.

	\item[(SM)] $\Ical$ satisfies (IM), where $\Ical$ is the set of those subsets of $E$ not including a nonempty element of $\Scal$.
\end{itemize}

\begin{prop}
 Let $\Scal\subseteq \Pcal(E)$. $\Scal$ is the set of scrawls of a matroid with ground set $E$ if and only if
it satisfies the scrawl axioms.
\end{prop}

\begin{proof}
 
Clearly, (S1) and (SM) are satisfied by the set of scrawls of any matroid $M$.
Now let $w, X, \{w_x\mid x\in X\}$ and  $z$ as in (S2). 
Let $o$ be an $M$-circuit such that $z\in o\subseteq w$, which exists as $w$ is a union of circuits.
Let $\bar X= o\cap X$. For every $x\in \bar X$, let $o_x$ be an $M$-circuit such that $x\in o_x\subseteq w_x$.
Now apply circuit elimination to obtain a circuit $o'$ 
such that $z\in o'\subseteq \left(o\cup  \bigcup_{x \in \bar X} w_x\right) \setminus \bar X$.
By the choice of the $w_x$ every $x\in X\setminus \bar X$ is not in $o\cup  \bigcup_{x \in \bar X} w_x$.
This completes the proof of (S2).

Conversely, let $\Scal$ satisfy the scrawl axioms and let $\Ccal$ be the set of minimal nonempty elements of $\Scal$.
First we show that any nonempty element of $\Scal$ is a union of elements of $\Ccal$.
Let $w\in \Scal$ and $z\in w$ be given. It suffices to show that there is a minimal nonempty element $o$ of $\Scal$ such that $z\in o\subseteq w$.
By (SM) there is a maximal $I\in \Ical$ such that $I\subseteq w-z$. Let $X=(w-z)\setminus I$. For every $x\in X$, there is by maximality of $I$ an element $w_x$ of $\Scal$ 
included in $I+x$.
Now apply scrawl elimination to obtain an element $w'$ of $\Scal$ with $z\in w'\subseteq I+z$.

We show $w'\in \Ccal$. 
Suppose for a contradiction that  $w'$ properly includes a nonempty element $w''$ of $\Scal$. Note that $z\in w''$ because otherwise $I$ would include a nonempty element of $\Scal$, 
which is impossible.
Pick $x\in w'\setminus w''$.
Now we may apply scrawl elimination (with $x$ in place of $z$ and $\{z\}$ in place of $X$) to obtain an element of $\Scal$ included in $I$ which is nonempty as it contains $x$. This is a contradiction.

Second, we show that $\Ccal$ satisfies the circuit axioms.
(C1),(C2) and (CM) are clear.
Now let $o, X, \{o_x\mid x\in X\}$ and  $z$ as in (C3).
We apply (S3) and obtain a  $w'\in \Scal$ such that $z\in w'\subseteq \left(o\cup  \bigcup_{x \in X} o_x\right) \setminus X$.
Above we showed how we find an $o'\in \Ccal$ such that $z\in o'\subseteq w'$, proving (C3).

Now let $M$ be the matroid whose circuits are given by $\Ccal$. Then the set of scrawls of $M$ is $\Scal$ by (S1).
\end{proof}

Dually a \emph{coscrawl} is a union of cocircuits.
Since no circuit and cocircuit can meet in only one element,
no scrawl and coscrawl can meet in only one element. In fact, this property gives us a simple characterisation of scrawls in terms of coscrawls and vice versa.

\begin{lem}\label{is_scrawl}
Let $M$ be a matroid, and let $w\subseteq E$. The following are equivalent:
\begin{enumerate}
 \item $w$ is a scrawl of $M$.
 \item $w$ never meets a cocircuit of $M$ just once.
 \item $w$ never meets a coscrawl of $M$ just once.
\end{enumerate}
\end{lem}

\begin{proof}
It is clear that (1) implies (3) and (3) implies (2), so it suffices to show that (2) implies (1). 
Suppose that (2) holds and let $e\in w$. Then in the minor $M/(w - e)\setminus(E \setminus w)$ on the groundset $\{e\}$, $e$ cannot be a co-loop, by the dual of \autoref{rest_cir} and (2). So $e$ must be a loop, and by \autoref{rest_cir} there is a circuit $o_e$ with $e \in o_e \subseteq w$. Thus $w$ is the union of the $o_e$, and so is a scrawl.

\end{proof} 

\begin{cor}\label{rest_scrawl}
Let $M$ be a matroid with ground set $E = C \dot \cup X \dot \cup D$, and let $w' \subseteq X$. Then $w'$ is a scrawl of $M' = M / C \backslash D$ if and only if 
there is a scrawl $w$ of $M$ with $w' \subseteq w \subseteq w' \cup C$.
\end{cor}
\begin{proof}
 The `only if' direction is clear from Lemma \ref{rest_cir} and the definition of a scrawl. 
Conversely, if there is such a $w$ then by the dual of Lemma \ref{rest_cir} $w'$ can never meet a cocircuit of $M'$ just once, so by Lemma \ref{is_scrawl} it is a scrawl of $M'$.
\end{proof}

\subsection{Thin sums matroids} 

Throughout the whole paper, we will follow the convention that if we write that a sum equals zero then this implicitely includes the statement that this sum is well-defined, that is, that only finitely many summands are nonzero.

\begin{dfn}
Let $A$ be a set, and $k$ a field. Let $f = (f_e | e \in E)$ be a family of functions from $A$ to $k$, and let $\lambda = (\lambda_e | e \in E)$ be a family of elements of $k$. We say that $\lambda$ is a {\em thin dependence} of $f$ iff for each $a \in A$ we have $$\sum_{e \in E} \lambda_e f_e(a) = 0 \, ,$$

We say that a subset $I$ of $E$ is {\em thinly independent} for $f$ iff the only
thin dependence of 
$f$ which is 0 everywhere outside $I$ is $(0 | e \in E)$. The {\em thin sums
system} $M_f$ of $f$ is the set of such thinly independent sets. This isn't
always the set of independent sets of a matroid \cite{matroid_axioms}, but when
it is we call it the {\em thin sums matroid} of $f$.
\end{dfn}

This definition is deceptively similar to the definition of the representable matroid corresponding to $f$ considered as a family of vectors in the $k$-vector space $k^A$. The difference is in the more liberal definition of dependence: it is possible for $\lambda$ to be a thin dependence even if there are infinitely many $e \in E$ with $\lambda_e \neq 0$, provided that for each $a \in A$ there are only finitely many $e \in E$ such that {\em both} $\lambda_e \neq 0$ and $f_e(a) \neq 0$. 

Indeed, the notion of thin sums matroid was introduced as a generalisation of the notion of representable matroid: every representable matroid is finitary, but this restriction does not apply to thin sums matroids. 

There are many natural examples of thin sums matroids: for example, the algebraic cycle matroid of any graph not including a subdivision of the Bean graph is a thin sums matroid, as follows:

\begin{dfn}\label{rep_A}
Let $G$ be a graph with vertex set $V$ and edge set $E$, and $k$ a field. We can pick a direction for each edge $e$, calling one of its ends its {\em source} $s(e)$ and the other its {\em target} $t(e)$. Then the family $f^G = (f^G_e | e \in E)$ of functions from $V$ to $k$ is given by $f_e = \chi_{t(e)} - \chi_{s(e)}$, where for any vertex $v$ the function $\chi_v$ takes the value 1 at $v$ and 0 elsewhere.
\end{dfn}

\begin{thm}[{\cite[$\S$6.7]{matroid_axioms}}]
Let $G$ be a graph not including any subdivision of the Bean graph. Then $M_{f^G}$ is the algebraic cycle matroid of $G$.
\end{thm}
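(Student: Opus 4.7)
The plan is to work at the level of scrawls. By the last two subsections, both $M_{f^G}$ and the algebraic cycle matroid are determined by their sets of scrawls, so it suffices to show these coincide. First I would unpack what a thin dependence is for $f^G$. If $\lambda = (\lambda_e | e \in E)$ and $v \in V$, then $\sum_e \lambda_e f^G_e(v) = \sum_{e:\,t(e)=v}\lambda_e - \sum_{e:\,s(e)=v}\lambda_e$; the well-definedness requirement says only finitely many edges at $v$ have $\lambda_e \neq 0$, and the equation says Kirchhoff conservation holds at every vertex. So a subset $w \subseteq E$ is a scrawl of $M_{f^G}$ iff $w$ is the support of a ``thin flow'' on $G$ with values in $k$: a function $\lambda : w \to k \setminus \{0\}$ whose restriction to the edges at any vertex is finitely supported and satisfies flow conservation there.

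Next I would recall (or take as input from $\S 6.7$ of \cite{matroid_axioms}) that the circuits of the algebraic cycle matroid of $G$ are the edge sets of finite cycles together with the edge sets of double rays, so its scrawls are precisely the edge sets that can be decomposed as edge-disjoint unions of finite cycles and double rays. Thus the theorem reduces to the claim that, under the no-Bean-subdivision hypothesis, an edge set $w$ supports a thin flow if and only if $w$ is a union of edge-disjoint finite cycles and double rays.

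The easy direction is that any finite cycle or double ray supports a nowhere-zero thin flow (orient it consistently and assign value $1$ to every edge), and unions of scrawls are scrawls by (S1). For the reverse direction, given a thin flow $\lambda$ on $w$, I would extract cycles and double rays greedily: starting from any $e_0 \in w$, walk forward using that at each vertex the incoming flow forces some outgoing edge with nonzero $\lambda$-value; this is always possible because the support at each vertex is finite and conservation holds. The walk either returns, giving a finite cycle, or continues indefinitely, giving a ray in the forward direction, and symmetrically in the backward direction, yielding a double ray. Subtract an appropriate scalar multiple of the characteristic flow of this cycle or double ray from $\lambda$, and iterate (transfinitely if necessary) until no edges remain in the support.

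The main obstacle will be making this extraction argument actually exhaust all of $w$ and justifying that every infinite one-sided walk can be completed to a double ray rather than getting stranded. This is exactly where the hypothesis on Bean-graph subdivisions enters: a subdivided Bean graph would be the local obstruction to extending a ray across an end in a way compatible with the flow, allowing a thin flow whose support cannot be carved into cycles and double rays. I would therefore invest the bulk of the proof in a careful case analysis of how a forward ray in the support of $\lambda$ must behave at an end of $G$, using the absence of a Bean subdivision to find a ``mate'' ray that extends it on the other side. I would expect to lean on the graph-theoretic groundwork of \cite{matroid_axioms} rather than reprove these combinatorial facts from scratch.
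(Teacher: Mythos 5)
Your proposal cannot be matched against an in-paper argument: the paper does not prove this statement at all, it imports it from \cite[\S 6.7]{matroid_axioms}. Judged on its own, your plan has a structural gap. You argue throughout with ``scrawls of $M_{f^G}$'', but the paper stresses (right before the theorem) that a thin sums system need not be a matroid, so circuits and scrawls of $M_{f^G}$ are simply not available until matroid-hood has been established; likewise the ``algebraic cycle matroid'' is a matroid only because of the Bean hypothesis. That hypothesis does not enter where you place it (completing rays of a flow-support into double rays across ends): the two independence systems in fact coincide for every graph and every field. What fails without the hypothesis is that neither system is a matroid -- this is Higgs' theorem, the actual content of \S 6 of \cite{matroid_axioms}. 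The correct skeleton is therefore: (i) show that a set is thinly independent for $f^G$ if and only if it contains no finite cycle and no double ray, and (ii) quote the result that this common system is a matroid when $G$ has no Bean subdivision. Your proposal never addresses (ii), and misattributes the role of the hypothesis.

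For (i) your decomposition machinery is both more than is needed and unsound as described. All that is required is that every nonempty support of a thin dependence contains a finite cycle or a double ray, and this is elementary: thinness says each vertex lies in only finitely many support edges, and conservation rules out a vertex with exactly one incident support edge, so the support is a locally finite subgraph of minimum degree at least $2$; hence it contains a finite cycle, or is a forest of minimum degree at least $2$ and then contains a double ray. (The converse, that finite cycles and double rays carry thin dependences, is the easy direction you already have.) By contrast, your greedy extraction with transfinite subtraction is not well-founded: at limit stages the value on an edge may have been altered infinitely often with no limit to pass to, and the forward walk need not close up or be injective. Moreover ``scrawl $=$ edge-disjoint union of circuits'' is not the definition (scrawls are arbitrary unions), and whether such sets admit disjoint circuit decompositions is exactly the kind of delicate question the paper itself leaves open at the end of Section 3; nothing in the theorem requires it. Finally, the identification ``scrawl of $M_{f^G}$ iff support of a thin flow'' also needs its other direction (a union of circuits being the support of a single thin dependence), which faces cancellation and thinness problems and should simply be avoided by working with independent sets rather than scrawls.
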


The following connection between scrawls and linear dependences will turn out to be useful.

\begin{lem}\label{thin_is_scrawl}
Let $M(f)$ be a finite representable matroid, and let $c$ be a linear dependence for $f$.
Then the support of $c$ is a scrawl.
\end{lem}

\begin{proof}
If $c$ is a linear dependence, then its support is either empty or includes a circuit $o$.
Then there is a linear dependence $c_o$ with support precisely $o$. Pick $e\in o$, then
$c-c_o\cdot \frac{c(e)}{c_o(e)}$ is also a linear dependence whose support is a proper subset of the support of $c$. If we repeat this process once for every element in the support of $c$, then we end up with the empty set and have constructed for every element in the support of $c$ a circuit that contains this element and that also is contained in the support of $c$.
\end{proof}

This Lemma is also true for infinite tame thin sums matroids \cite{THINSUMS}.

\section{Binary matroids}

\begin{thm}\label{char_binary}
Let $M$ be a tame matroid. Then the following are equivalent:
\begin{enumerate}
\item $M$ is a binary thin sums matroid.
\item For any circuit $o$ and cocircuit $b$ of $M$, $|o \cap b|$ is even.
\item For any circuit $o$ and cocircuit $b$ of $M$, $|o \cap b| \neq 3$
\item $M$ has no minor isomorphic to $U_{2,4}$. 
\item If $o_1$, $o_2$ are circuits then $o_1 \triangle o_2$ is empty or includes a circuit.
\item If $o_1$, $o_2$ are circuits then $o_1 \triangle o_2$ is a disjoint union of circuits.
\item If $(o_i | i \in I)$ is a finite family of circuits then $\bigtriangleup_{i \in I}o_i$ is empty or includes a circuit.
\item If $(o_i | i \in I)$ is a finite family of circuits then $\bigtriangleup_{i \in I}o_i$ is a disjoint union of circuits.
\item For any base $s$ of $M$, and any circuit $o$ of $M$, $o = \bigtriangleup_{e \in o \setminus s} o_e$, where $o_e$ is the fundamental circuit of $e$ with respect to $s$.
\end{enumerate}
\end{thm}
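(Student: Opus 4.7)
The plan is to establish the nine equivalences via a core cycle $(1) \Rightarrow (2) \Rightarrow (3) \Rightarrow (4) \Rightarrow (1)$ closed using \autoref{exmin}, with the remaining conditions folded in by additional direct implications. The hard work is concentrated in $(4) \Rightarrow (1)$, which invokes \autoref{exmin}; all other steps are structural.

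For the core cycle, $(1) \Rightarrow (2)$ uses that a binary thin sums representation makes each circuit $o$ the support of a minimal dependence $c_o$, and by duality of tame thin sums matroids each cocircuit $b$ is likewise the support of a minimal dependence in a representation of $M^*$; $\Fbb_2$-orthogonality forces $|o \cap b|$ to be even, and it is finite by tameness. $(2) \Rightarrow (3)$ is trivial. For $(3) \Rightarrow (4)$: if $M / C \backslash D \cong U_{2,4}$, then $U_{2,4}$ has a three-element circuit and three-element cocircuit meeting in three elements, and \autoref{ext_C_cap_D} lifts them to an $M$-circuit--cocircuit pair still meeting in exactly three elements, contradicting $(3)$. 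For $(4) \Rightarrow (1)$: every finite minor of $M$ inherits the absence of a $U_{2,4}$-minor, hence is binary by Tutte's theorem, so \autoref{exmin} gives that $M$ is a binary thin sums matroid.

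For the symmetric-difference conditions, I would prove the strongest, $(2) \Rightarrow (8)$, and observe the chain of trivialities $(8) \Rightarrow (7) \Rightarrow (5)$ and $(8) \Rightarrow (6) \Rightarrow (5)$. Given $(2)$, the set $X = \bigtriangleup_i o_i$ meets every cocircuit $b$ in $|X \cap b| \equiv \sum_i |o_i \cap b| \equiv 0 \pmod 2$ elements, hence is a scrawl by \autoref{is_scrawl}. Apply Zorn to pick a maximal pairwise-disjoint family $\Ccal$ of circuits inside $X$; the leftover $Y = X \setminus \bigcup \Ccal$ still has even cocircuit intersections (the relevant sums are finite by tameness), so $Y$ is a scrawl, and if nonempty it contains a circuit, contradicting maximality. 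To close back, I prove $(5) \Rightarrow (4)$ directly: given $M / C \backslash D \cong U_{2,4}$ with $C$ chosen $M$-independent, lift two $U_{2,4}$-circuits sharing two elements via \autoref{rest_cir} to $M$-circuits $o_1, o_2$ with $o_1 \triangle o_2 \subseteq \{a, b\} \cup C$ for an $N$-independent pair $\{a, b\}$; any $M$-circuit inside $o_1 \triangle o_2$ would, upon contracting $C$ and deleting $D$, exhibit a dependence of some subset of $\{a, b\}$ in $U_{2,4}$, which is impossible.

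For $(9)$, the implication $(1) \Rightarrow (9)$ is a direct computation in the binary representation: the $\Fbb_2$-linear combination $c_o + \sum_{e \in o \setminus s} c_{o_e}$ vanishes outside $s$ by construction, and on $e' \in s$ reduces via \autoref{fdt} to $[e' \in o] + |b_{e'} \cap (o \setminus s)| \pmod 2$, which vanishes by $(2)$ applied to $o$ and $b_{e'}$; comparing supports yields $o = \bigtriangleup_{e \in o \setminus s} o_e$. Conversely, I would close via $(9) \Rightarrow (4)$ by a minor-lifting argument analogous to $(5) \Rightarrow (4)$: picking a $U_{2,4}$-minor $M/C \backslash D$ with $C$ $M$-independent, extending a $U_{2,4}$-base through $C$ to an $M$-base $s$, and deriving a contradiction by comparing a lifted $M$-circuit against $\bigtriangleup_{e \in o \setminus s} o_e$ on the minor, where the failure of $(9)$ in $U_{2,4}$ propagates back to $M$. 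The main obstacle is $(4) \Rightarrow (1)$, which is precisely \autoref{exmin} specialised to $\Fbb_2$; the only genuinely new bookkeeping is ensuring that the closing implications $(5) \Rightarrow (4)$ and $(9) \Rightarrow (4)$ are argued without secretly using the very conclusions we want to derive.
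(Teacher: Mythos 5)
Your step $(1)\Rightarrow(2)$ is a genuine gap as written. Knowing that $M$ is a binary thin sums matroid and, by the duality theorem for tame thin sums matroids, that $M^*$ is one as well, gives you a representation $f$ of $M$ and some unrelated representation $g$ of $M^*$, with every circuit the support of a thin dependence of $f$ and every cocircuit the support of a thin dependence of $g$. But there is no pairing between these two representations from which an ``$\Fbb_2$-orthogonality'' of $o$ and $b$ could be read off: the statement you actually need --- that circuits and cocircuits of a tame thin sums matroid satisfy the orthogonality relation, i.e.\ $|o\cap b|$ even in the binary case --- is precisely the hard direction of \autoref{char} from \cite{THINSUMS}, which the paper explicitly declines to rely on (``we shall not need its full power here''). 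Since your $(1)\Rightarrow(9)$ also invokes (2), in your scheme condition (1) implies nothing at all until this step is repaired. The repair is cheap and is what the paper does: prove $(1)\Rightarrow(4)$ instead, using that the class of tame thin sums matroids over $\Fbb_2$ is closed under minors (cited in the introduction and used in the proof of \autoref{rep}) together with the fact that the finite matroid $U_{2,4}$ is not binary; alternatively one can get $(1)\Rightarrow(2)$ by passing to the finite minor in which $o\cap b$ is both a circuit and a cocircuit, which is binary representable by minor closure.

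Beyond that, your route differs substantially from the paper's, mostly legitimately. Using \autoref{exmin} for $(4)\Rightarrow(1)$ is not circular (the proof of \autoref{rep} in Section 4 nowhere uses \autoref{char_binary}), but it inverts the paper's order and outsources exactly the two binary-specific arguments the paper supplies: $(4)\Rightarrow(2)$ via reduction to a finite minor in which $o\cap b$ is a spanning and cospanning circuit--cocircuit pair plus Tutte's theorem, and $(2)\Rightarrow(1)$ via the explicit representation $E\to\Fbb_2^{\Ccal(M^*)}$; so your proof only makes sense placed after Section 4 and loses the self-contained, compactness-free character of the binary case. Your $(2)\Rightarrow(8)$ is the paper's argument, and your direct $(5)\Rightarrow(4)$ and $(9)\Rightarrow(4)$ (in place of the paper's $(5)\Rightarrow(3)$ and $(9)\Rightarrow(2)$) do work, but the sketched ``propagation to the minor'' is where the content sits: you need the minor written as $M/C\backslash D$ with $C$ independent and $D$ coindependent, you need \autoref{rest_scrawl} (not just \autoref{rest_cir}) to see that the trace on the minor of a lifted circuit is dependent, and for $(9)\Rightarrow(4)$ you must check that, with respect to the base $s'\cup C$, the fundamental circuits of $M$ trace exactly onto the fundamental circuits of the $U_{2,4}$-minor with respect to $s'$ (uniqueness of the circuit inside $s'+e$). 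None of this is hard, but it should be spelled out rather than asserted.
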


\begin{proof}
We shall prove the following implications:

 \begin{center}
 \includegraphics[height=4cm]{./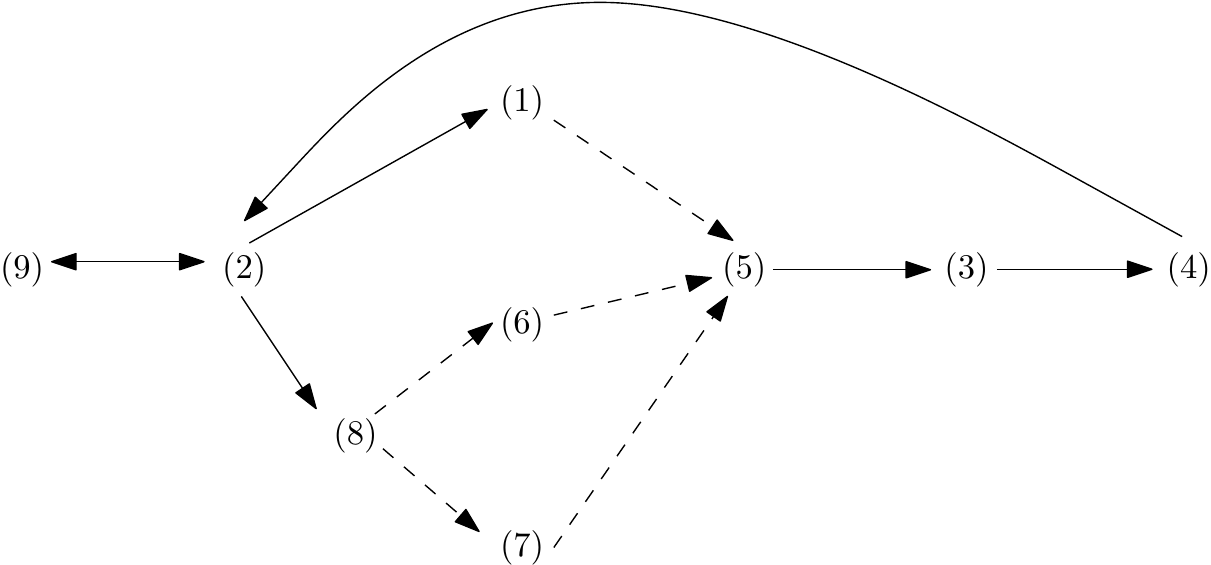}
	\label{impli}
\end{center}

Those implications indicated by dotted arrows are clear. We shall prove the remaining implications.

(2) implies (1): We need to find a suitable thin sums system. Let $A$ be the set of cocircuits of $M$, and let 
$E \xrightarrow{f} \Fbb_2^A$ 
be the map sending $e$ to the function which sends $b\in A$ to 1 if $e \in b$ and 0 otherwise.

We are to show that the thin sums matroid $M_{ts}$ defined by $f$ is $M$. 
Since the characteristic function of any $M$-circuit is a thin dependence for $f$ 
with support equal to that circuit by (2), any $M$-dependent set is also $M_{ts}(f)$-dependent.

It remains to show that the support of every non-zero thin dependence is $M$-dependent.  
By \autoref{is_scrawl} the support of every non-zero thin dependence includes a circuit, as desired.

(2) implies (8): Let $(o_i | i \in I)$ be a finite family of circuits. By Zorn's Lemma, we can choose a maximal family $(o_j | j \in J)$ 
of disjoint circuits such that $\bigcup_{j \in J} o_j \subseteq \bigtriangleup_{i \in I} o_i$, and let 
$w = \bigtriangleup_{i \in I} o_i \setminus \bigcup_{j \in J}o_j$. Let $b$ be any cocircuit of $M$, so that $|b \cap o_i|$ is even for each $i \in I$. 
Then $|b \cap \bigtriangleup_{i \in I} o_i|$ is also even, and in particular finite. Since the $o_j$ are disjoint, there can only be finitely many 
of them that meet $b \cap \bigtriangleup_{i \in I} o_i$, and since for each such $j$ we have that $|b \cap o_j|$ is even, it follows that $|b \cap w|$ is even. 
In particular, $b \cap w$ doesn't have just one element. Since $b$ was arbitrary, by \autoref{is_scrawl} $w$ is a scrawl of $M$ and so if it is nonempty it includes a circuit. 
But in that case, we could add that circuit to the family $(o_j| j \in J)$, contradicting the maximality of that family. Thus $w$ is empty, and $\
\bigtriangleup_{i \in I} o_i = \bigcup_{j \in J}o_j$ is a disjoint union of circuits.

(5) implies (3): 
Suppose, for a contradiction, that (5) holds but (3) fails, and choose a circuit $o$ and a cocircuit $b$ with $o \cap b = \{x, y, z\}$ of size 3. 
Pick a base $s$ of $(E\setminus b) + x$ including $o-y-z$, which exists by $(IM)$. As $b$ is a cocircuit,
$b - x$ avoids some $M$-base, thus $(E \setminus b) + x$ is spanning and thus $s$ is spanning, as well.
Let $o_y$ and $o_z$ be the fundamental circuits of $y$ and $z$ with respect to $s$.

It suffices to show that $o_y\triangle o_z\subseteq o-x$. Indeed, since $y,z\in o_y\triangle o_z$, (5) then yields a circuit properly included in $o$, which is impossible.
By \autoref{o_cap_b} we can't have $o_y \cap b = \{y\}$ so we must have $x \in o_y$.
Similarly, $x\in o_z$, and so $x\notin o_y\triangle o_z$.
So it is sufficient to show that $o_y$ and $o_z$ agree outside $o$, in other words: $o_y\subseteq o_z\cup o$ and $o_z\subseteq o_y\cup o$.

To see this, first note that by uniqueness of the fundamental circuit of $y$ it suffices to show that $y$ is spanned by $(o_z-z)\cup (o-y-z)$.
As $z$ is spanned by $(o_z-z)$, $o-y$ is spanned by $(o_z-z)\cup (o-y-z)$.
Since $o$ is a circuit, $y$ is also spanned by $(o_z-z)\cup (o-y-z)$, as desired. 
A similar argument yields $o_z\subseteq o_y\cup o$, completing the proof.

(3) implies (4): Since any subset of the ground set of $U_{2,4}$ of size 3 is both a circuit and a cocircuit, 
it is easy to find a circuit and cocircuit in $U_{2,4}$ whose intersection has size 3. So we simply apply \autoref{ext_C_cap_D}.

(4) implies (2): Suppose for a contradiction that (4) holds but (2) does not. Then let $o$ be a circuit and $b$ a cocircuit such that $|o \cap b| = k$
is odd. By contracting $o \setminus b$ and deleting $b \setminus o$, we obtain a minor $M'$ of $M$ in which $o \cap b$ is both a circuit and a cocircuit. 
Let $s$ be a minimal spanning set containing $o \cap b$, which exists by $(IM^*)$. 
Then in the minor $M''$ of $M'$ obtained by contracting $s \setminus (o \cap b)$, $(o \cap b)$ is spanning, and is still both a circuit and a cocircuit. 
By a similar removal, we can find a minor $M'''$ of $M''$ in which $o \cap b$ is a circuit and a cocircuit and is both spanning and cospanning. 
Let $x \in o \cap b$. Then $o \cap b - x$ is both a base and a cobase of $M'''$, and it is finite (it has size $k-1$). 
As $o\cap b-x$ is a base and a cobase, the complement of $o\cap b-x$ is also a base and a cobase.
Thus the ground set of $M'''$ is also finite (it has size $2k - 2$). 
Applying the finite version of the theorem, then, $M'''$ contains a $U_{2,4}$ minor, which is also a minor of $M$, giving the desired contradiction.

(9) implies (2): first we will show that the following implies (2):
\begin{equation}\label{9_sim}
\begin{minipage}[c]{0.8\textwidth}
For any base $s$ of $M$, any circuit $o$ meets every fundamental cocircuit of $s$ in an even number of edges.
\end{minipage}\tag{\ensuremath{\diamond}}
\end{equation}
To see that (\ref{9_sim}) implies (2), it suffices to show that every cocircuit $b$ is fundamental cocircuit of some base $s$.
Let $e\in b$. Then as $b$ is a cocircuit, $E\setminus (b-e)$ is spanning.  Thus by $(IM)$ there is a base $s$ of $E\setminus (b-e)$,
which clearly has $b$ as fundamental cocircuit.

So it remains to see that (9) implies (\ref{9_sim}).
By (9), $o = \bigtriangleup_{e \in o \setminus s} o_e$. Let $b_f$ be some fundamental cocircuit of $s$ for some $f\in s$.
By \autoref{fdt} $o_e\cap b_f$ is empty or $o_e\cap b_f=\{e,f\}$.
So it suffices to show that every $f$ is in only finitely many $o_e$,
which follows from the fact that $o = \bigtriangleup_{e \in o \setminus s} o_e$ is well defined at $f$.
This completes the proof.

(2) implies (9): we have to show for every edge $f$ that it is contained in only finitely many $o_e$
and that $ f \in o \iff f \in \bigtriangleup_{e \in o \setminus s} o_e(f)$.
If $f\notin s$, this is easy, so let $f\in s$.
By \autoref{fdt} $f\in o_e$ iff $e\in b_f$. As $M$ is tame $|o\cap b_f|$ is finite, so there are only finitely many such $e$.
By (2), $|o\cap b_f|$ is even. If $f\notin o$, all such $e$ are not contained in $s$, so $f\notin \bigtriangleup_{e \in o \setminus s} o_e$.
 If $f\in o$, all such $e$ but $f$ are not contained in $s$, so $f\in \bigtriangleup_{e \in o \setminus s} o_e$.
This completes the proof.
\end{proof}

We remark that we might also put the duals of the statements in the list onto the list.
It might be worth noting that (7) becomes false if we also allow $I$ to be infinite.
To see this, consider the finite cycle matroid of the graph obtained from a ray by adding a vertex that is adjacent to every vertex on the ray.
Indeed, the symmetric difference of all $3$-cycles is a ray starting at this new vertex.
This set is not empty, and nor does it include a circuit, so the infinite version of (7) fails.

More generally, the finite cycles of a locally finite graph generate the cycle space, which may contain infinite cycles \cite{DiestelBook10}.

We offer the following related open questions.
Let (10) be the statement like (9) but for only one base of $M$.
For finite matroids, (10) is equivalent to (9).
Is the same true for tame matroids?

The following simple question also remains open:

In \autoref{char_binary}, we assumed that $M$ is tame.
Without this assumption, the theorem is no longer true. For example, in \cite{BCP:quirkets} there is an example of a wild matroid satisfying (2-6) and (10), but not (1) or (7-9). However, this matroid is not a binary thin sums matroid. In fact, we still do not know the answer to the following:

\begin{oque}\label{Not_tame?}
Is every binary thin sums matroid tame? 
\end{oque}

In a binary tame matroid, it is easy to see that any set meeting every cocircuit not in an odd number of edges is a disjoint union of circuits provided that the set is either countable
or does not meet any cocircuit infinitely.
A well-known result of Nash-Williams says that the above is also true if the matroid is the finite cycle matroid of some graph.
Does this extend to all binary tame matroids?

\begin{oque}
Let $M$ be a binary tame matroid and let $X$ be a set that 
meets no cocircuit in an odd number of edges.
Must $X$ be a disjoint union of circuits?
\end{oque}

\section{Representable matroids}

The aim of this section is to provide an excluded-minors characterisation of thin sums matroids in the class of tame matroids. 
The following definition will be essential.

Let $k$ be a field and let $k^*$ denote the set of nonzero elements of $k$.
A \emph{$k$-painting for the matroid $M$} is a choice of a function $c_o \colon o \to k^*$
for each circuit $o$ of $M$ and a function $d_b \colon b \to k^*$ for each cocircuit $b$ of $M$ such that for any circuit $o$ and cocircuit $b$ we have
\begin{equation}\label{sumeq}
 \sum_{e \in o \cap b} c_o(e)d_b(e) = 0 \, .
\end{equation}

A matroid is \emph{$k$-paintable} if it has a $k$-painting. 
The method we will use is motivated by the following result, which is taken from \cite{THINSUMS}:

\begin{lem}\label{char}
Let $M$ be a tame matroid. Then $M$ is a thin sums matroid over the field $k$ iff 
$M$ is $k$-paintable.
\end{lem}

Proving this Lemma would go beyond the scope of this paper, but we shall not need its full power here. Instead, we shall rely on the following weaker statement:

\begin{lem}\label{mini_char}
Let $M$ be a finite matroid which is representable over the field $k$. 
Then $M$ is $k$-paintable.
\end{lem}
\begin{proof}
Let $M$ be given as $M(\phi)$ for some $E \xrightarrow{\phi} V$. For each circuit $o$ of $M$, pick some linear dependence 
$\hat{c}_o$ of $\phi$ with support $o$, and let $c_o = \hat{c}_o \restric_o$. Now let $b$ be any cocircuit of $M$, 
and fix some $e_b \in b$.
By \autoref{o_cap_b}, we can find for each $e \in b - e_b$ some circuit $o(e)$ of $M$ such that $o(e) \cap b = \{e_b, e\}$. 
We define the map $d_b \colon b \to k^*$ to be $1$ at $e_b$ and $-\frac{c_{o(e)}(e_b)}{c_{o(e)}(e)}$ for $e \in b-e_b$ 
(note that this choice ensures that \eqref{sumeq} holds for $b$ and each $o(e)$). 

Let $o$ be any circuit of 
$E$. It remains to show that $\sum_{e \in o \cap b} c_o(e)d_b(e) = 0 $.
Plugging in the values for $d_b(e)$, it remains to show
\[
 \hat{c}_o(e_b) - \sum_{e \in o \cap (b - e_b)} \frac{c_o(e)c_{o(e)}(e_b)}{c_{o(e)}(e)} =0.
\]
That is, we need $c(e_b)=0$, where 
$$c = \hat{c}_o - \sum_{e \in o \cap (b - e_b)}\frac{c_o(e)}{c_{o(e)}(e)}\hat{c}_{o(e)} \, .$$

As $c$ is a finite sum of linear dependences, it is again a linear dependence.
But for any $e \in b - e_b$, we have $c(e) = \hat{c}_o(e) -\frac{\hat{c}_o(e)}{c_{o(e)}(e)}c_{o(e)}(e) = 0$. 
By \autoref{thin_is_scrawl}, the support of $c$ is a scrawl, so it can't meet $b$ in only the point $e_b$. Thus $c(e_b)=0$, as desired.

\end{proof}

Key to this proof is the idea that the painting of the cocircuits of $M$ is determined by that of the circuits and vice versa. This fact also allows us to observe that a painting of $M$ uniquely induces paintings of all minors of $M$.

\begin{dfn}\label{indpaint}
Let $M$ be a matroid, and let $((c_o | o \in \Ccal(M)), (d_b | b \in \Ccal(M^*))$ be a $k$-painting of $M$. Let the ground set $E$ of $M$ be partitioned as $X \dot \cup C \dot \cup D$, and let $M' = M / C \backslash D$. We say that a painting $((c'_o | o \in \Ccal(M')),  (d'_b | b \in \Ccal(M'^*))$ of $M'$ is \emph{induced} by that of $M$ if and only if for each $o' \in \Ccal(M')$ there is $o \in \Ccal(M)$ with $o' \subseteq o \subseteq o' \cup C$ and such that $c'_{o'} = c_o \restric_{o'}$ and for each $b' \in \Ccal(M'^*)$ there is $b \in \Ccal(M^*)$ with $b' \subseteq b \subseteq b' \cup D$ and such that $d'_{b'} = d_b \restric_{b'}$.
\end{dfn}

It is clear from \autoref{rest_cir} that any painting of $M$ induces at least one painting of each minor $M'$. We can use the fact that the paintings of the circuits and cocircuits determine each other to show that these induced paintings are unique up to scalar factors on the $c'_o$ and $d'_b$.

\begin{lem}\label{unind}
Let $M$, $M'$ and their paintings be as in \autoref{indpaint}. Let $o'$ be any circuit of $M'$ and $o$ any circuit of $M$ with $o' \subseteq o \subseteq o' \cup C$. Then there is $\lambda \in k^*$ with $c_o \restric_{o'} = \lambda c'_{o'}$.
\end{lem}
\begin{proof}
Pick any $e \in o'$, and let $\lambda = \frac{c_o(e)}{c_{o'}'(e)}$. For any other $f \in o'$, by \autoref{o_cap_b} there is a cocircuit $b'$ of $M'$ with $o' \cap b' = \{e, f\}$. 
Since the painting of $M'$
is induced from that of $M$, there is a cocircuit $b$ of $M$ such that
$d_b(g)=d'_{b'}(g)$ for all $g\in E(M')$ and $b' \subseteq b \subseteq b' \cup
D$, and so $o \cap b = \{e,f\}$.
Using the identities in the definition of painting, we deduce that
$$c_o(e)d_{b}(e) + c_o(f)d_{b}(f) = 0 \quad\mbox{ and } \quad c'_{o'}(e)d'_{b'}(e) + c'_{o'}(f)d'_{b'}(f) = 0$$
and so 
$$c_o(f) = -\frac{c_o(e)d_{b}(e)}{d_{b}(f)} = -\frac{(\lambda c_{o'}'(e)) d'_{b'}(e)}{d'_{b'}(f)} = \lambda c'_{o'}(f)$$
which gives the desired result, since $f$ was arbitrary.
\end{proof}

Our main result is the following.

\begin{thm}\label{rep}
Let $M$ be a tame matroid and $k$ be a finite field. Then the following are equivalent.
\begin{enumerate}
 \item $M$ is a thin sums matroid over $k$.
\item $M$ is $k$-paintable.
\item Every finite minor of $M$ is $k$-representable.
\end{enumerate}
\end{thm}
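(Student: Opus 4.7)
My plan is to establish the cycle (1) $\Rightarrow$ (3) $\Rightarrow$ (2) $\Rightarrow$ (1), using only \autoref{mini_char} and the preliminaries. The implication (1) $\Rightarrow$ (3) is direct from the cited closure of tame thin sums matroids over $k$ under minors plus the fact that a finite thin sums matroid is just a $k$-representable matroid. For (2) $\Rightarrow$ (1), I would take $A = \Ccal(M^*)$ and, from a painting $((c_o), (d_b))$, define $f_e \colon A \to k$ by $f_e(b) = d_b(e)$ if $e \in b$ and $0$ otherwise. Each $c_o$ extended by $0$ off $o$ is a thin dependence of $f$, since the painting equation plus tameness makes $\sum_e c_o(e) f_e(b) = \sum_{e \in o \cap b} c_o(e) d_b(e) = 0$ a well-defined finite sum, so $M$-circuits are $M_f$-dependent. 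Conversely, any nonzero thin dependence $\lambda$ satisfies $\sum_{e \in b} \lambda_e d_b(e) = 0$ as a finite sum for every cocircuit $b$, which forbids $|\mathrm{supp}(\lambda) \cap b| = 1$; by \autoref{is_scrawl}, $\mathrm{supp}(\lambda)$ is an $M$-scrawl, hence contains an $M$-circuit, so $M_f$-dependent sets are $M$-dependent. Thus $M = M_f$.

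The heart of the theorem is (3) $\Rightarrow$ (2), which I plan to establish by compactness. The product $P = \prod_o (k^*)^o \times \prod_b (k^*)^b$ is compact by Tychonoff since $k$ is finite, and each painting equation $\sum_{e \in o \cap b} c_o(e) d_b(e) = 0$ cuts out a closed subset $P_{o, b}$ — closed because, by tameness, $o \cap b$ is finite and the constraint involves only finitely many coordinates. A $k$-painting is exactly a point of $\bigcap P_{o, b}$, so by the finite intersection property it suffices to produce, for each finite $F \subseteq \Ccal(M) \times \Ccal(M^*)$, a partial painting satisfying $F$. Given such an $F$, let $O, B$ be the circuits and cocircuits appearing and $V = \bigcup_{(o, b) \in F}(o \cap b)$ (finite by tameness). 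After enlarging $F$ to $O \times B$, the sets $C = \bigcup_{o \in O}(o \setminus V)$ and $D = \bigcup_{b \in B}(b \setminus V)$ are disjoint (an element of both would lie in some $o \cap b \subseteq V$), and with $W = E \setminus (V \cup C \cup D)$ the matroid $M^\dagger = M/(C \cup W) \setminus D$ is a finite minor on ground set $V$. By (3) and \autoref{mini_char}, $M^\dagger$ is $k$-paintable; by \autoref{rest_scrawl} and its dual, each $o \in O$ corresponds to a scrawl $o \cap V$ of $M^\dagger$ and each $b \in B$ to a coscrawl $b \cap V$. Fixing a representation $\phi$ of $M^\dagger$ from which the painting arises, I would select a linear dependence $\hat c_o$ of $\phi$ supported in $o \cap V$ and a cocircuit-functional $\hat d_b$ of $\phi$ supported in $b \cap V$, and set $c_o = \hat c_o$, $d_b = \hat d_b$ on the scrawls (extending to $k^*$-values arbitrarily outside $V$). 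Because $o \cap b \subseteq V$ and linear dependences of $\phi$ are orthogonal to its cocircuit-functionals, the painting equation for $(o, b) \in F$ then follows automatically.

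The main technical obstacle is arranging each $\hat c_o$ to be nonzero on \emph{all} of $o \cap V$ so that $c_o$ takes values in $k^*$: a scrawl need not be the exact support of a single linear dependence over small fields (as one sees for three parallel elements over $\mathbb{F}_2$). I plan to handle this case-by-case: over $\mathbb{F}_2$ the painting is forced to be $c_o \equiv 1 \equiv d_b$, and the constraints reduce to $|o \cap b|$ being even, which follows from $\mathbb{F}_2$-representability of $M^\dagger$ via the binary disjoint-circuit decomposition of the scrawl $o \cap V$ and coscrawl $b \cap V$ given by \autoref{char_binary}; over larger fields the space of linear dependences supported in $o \cap V$ is rich enough (generically) that a suitable combination of circuit-dependences attains full support on $o \cap V$, and \autoref{unind} serves to reconcile values assigned to a given $o$ via different pairs $(o, b), (o, b') \in F$.
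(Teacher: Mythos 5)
Your overall architecture is the paper's: (1)$\Rightarrow$(3) via minor-closedness, (2)$\Rightarrow$(1) via \autoref{is_scrawl}, and (3)$\Rightarrow$(2) by Tychonoff compactness on $(k^*)$-valued assignments with one closed constraint set per circuit--cocircuit pair, verified on finite subfamilies by passing to a finite minor. Those parts are sound. The genuine gap is at the heart of (3)$\Rightarrow$(2): after passing to your finite minor $M^\dagger$, all you know is that $o \cap V$ is a scrawl and $b \cap V$ a coscrawl of $M^\dagger$, and your plan then needs, for each $o \in O$, a single linear dependence $\hat c_o$ of the chosen representation $\phi$ supported inside $o \cap V$ and nonzero on all of it (if $\hat c_o$ vanishes somewhere on some $o\cap b$, the coordinate you read off is $0 \notin k^*$, and replacing it by an arbitrary nonzero value destroys the orthogonality identity you rely on). Your claim that over fields larger than $\Fbb_2$ such a full-support dependence exists ``generically'' is false for every finite field: a scrawl of a finite $k$-representable matroid need not be the support of any single dependence. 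Over $\Fbb_3$, the edge set of $K_4$ is a scrawl of the cycle matroid $M(K_4)$, but a dependence of the standard representation with full support would be a nowhere-zero $3$-flow of $K_4$, which does not exist; over general $\Fbb_q$, the cographic matroid of $K_{q+2}$ gives the same failure, since a full-support covector would be a nowhere-zero tension, i.e.\ a proper $q$-colouring. Nothing in your construction prevents $o\cap V$ from being such a scrawl. The proposed case split does not rescue this: the $\Fbb_2$ reduction via \autoref{char_binary} covers only one field, and \autoref{unind} concerns uniqueness of induced paintings, not existence of a full-support dependence.

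This is precisely where the paper does extra work, in \autoref{finite_minor}: before forming the finite minor it augments the family by auxiliary cocircuits $b_{o,e}$ with $o \cap b_{o,e} = \{e_o, e\}$ for each $e \in o \cap F$ (and dually auxiliary circuits $o_{b,e}$ for each $b \in B$), obtained from \autoref{o_cap_b}, and chooses the contracted and deleted sets so that these auxiliaries survive as coscrawls (resp.\ scrawls) of the minor. Their presence forces an actual circuit $o'$ of the finite minor with $o' \cap F = o \cap F$, and dually a cocircuit $b'$ with $b' \cap F = b \cap F$; then \autoref{mini_char} applied to the finite minor supplies painting functions taking values in $k^*$ on all of $o'$ and $b'$, which is exactly what your plan is missing. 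To salvage your approach you would need to build this augmentation into the choice of $M^\dagger$ (or prove some substitute giving nowhere-zero dependences on the relevant traces); as written, the step producing $\hat c_o$ and $\hat d_b$ fails.
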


In \autoref{char_binary} we already proved this theorem if $k=\Fbb_2$. The general case uses similar ideas but is more complex.

\begin{proof}
To see that (1) implies (3), we use that the class of tame thin sums matroids is closed under taking minors, as shown in \cite{THINSUMS}.

To see that (2) implies (1), let $((c_o|o\in \Ccal(M)),(d_b|b\in \Ccal(M^*)))$ be a $k$-painting.
 From this $c$ we can construct a representation of $M$ as a thin sums matroid over $k$. 
Let $E \xrightarrow{f} k^{\Ccal(M^*)}$ be the function sending $e$ to the function sending $b$ to $d_b(e)$ if $e \in b$ and to 0 otherwise. 
For any thin dependence $c$ and any cocircuit $b$ we know that $0=\sum_{e\in E} c(e) f(e)(b)= \sum_{e\in b} c(e) d_b(e)$.
In particular, the support of $c$ never meets a cocircuit only once, so it is a scrawl by \autoref{is_scrawl} and so $M$-dependent if it is nonempty.
Thus every $M_{ts}(f)$-dependent set is $M$-dependent. 

Conversely, for any $M$-circuit $o$ the function from $E$ 
to $k$ sending $e$ to $c_o(e)$ if $e \in o$ and to 0 otherwise is a thin dependence for $f$ with support equal to that circuit, 
and so any $M$-dependent set is also $M_{ts}(f)$-dependent.

It remains to show that (3) implies (2). We will use a compactness argument, so we begin by defining the topological space we will use. 
We would like an element of of this space to correspond to a choice of functions as in Lemma \ref{char} 
(though not necessarily satisfying the restrictions of that Lemma), so we take
$$H = \left(\bigcup_{o \in \Ccal(M)}\{o\} \times o\right) \amalg \left(\bigcup_{b \in \
\Ccal(M^*)}\{b\} \times b\right)$$
and take the underlying set of our space to be $X = (k^*)^H$ - the compact topology on $X$ that we will use is given by the product of $H$ copies of the discrete topology on $k^*$. 

For each circuit $o$ and cocircuit $b$ of $M$, the set 
$$C_{o, b} = \left\{c \in (k^*)^H \left| \sum_{e \in o \cap b} c(o, e)c(b, e) = 0\right.\right\}$$ is 
closed because $o \cap b$ is finite. We shall now show that any finite intersection of such sets is nonempty.

Let $K \subseteq \Ccal(M) \times \Ccal(M^*)$ be finite. Let $O$ be the set of circuits appearing as first components of elements of $K$, 
and let $B$ be the set of cocircuits appearing as second components of elements of $K$.
 Let $F = \bigcup O \cap \bigcup B$.  Note that $F$ is finite, since $M$ is tame.

Next, we shall construct a finite minor $M'$ of $M$ that will help us to prove that the finite intersection is nonempty.

\begin{lem}\label{finite_minor}
There exists a finite minor $M'$ of $M$ such that \newline
for every $o\in O$ there is an $M'$-circuit $o'\subseteq o$ such that $o'\cap F=o\cap F$ and \newline
for every $b\in B$ there is an $M'$-cocircuit $b'\subseteq b$  such that $b'\cap F=b\cap F$.
\end{lem}

\begin{proof}[Proof of the lemma]
We may assume that for each $o \in O$ and $b \in B$ the sets $o \cap F$ and $b \cap F$ are nonempty by adding an edge from $o$ or $b$ to $F$ if necessary. 
We pick an element $e_o \in o \cap F$ for each $o \in O$. Next, for each $o \in O$ and each $e \in o \cap F - e_o$ 
we pick a cocircuit $b_{o,e}$ with $o \cap b_{o,e} = \{e_o, e\}$ (this is possible by \autoref{o_cap_b}). 
Let $B'$ be the set of all cocircuits picked in this way or contained in $B$. Note that $B'$ is finite. 
Similarly, we pick for each $b \in B$ an element $e_b \in F \cap b$ and then pick a circuit $o_{b,e}$ with $o_{b,e} \cap b = \{e_b, e\}$ 
for each $e \in F \cap b - e_b$, and we collect all of these, together with all circuits contained in $O$, in a finite set $O'$.

Let $F' = \bigcup O' \cap \bigcup B'$. Note that $F'$ is also finite since $M$ is tame. Let $C = \bigcup O' \setminus F'$, 
and let $D = E \setminus \bigcup O'$. Thus $E = C \dot \cup F' \dot \cup D$. Let $M'$ be the finite minor of 
$M$ with ground set $F'$ that is given by $M /C\backslash D$. For each $o \in O$, $o \setminus F' \subseteq C$ 
and so $o \cap F'$ is a scrawl of $M'$ by \autoref{rest_scrawl}. Let $o'$ be a circuit of $M'$ with 
$e_o \in o' \subseteq o \cap F'$.  Then for each $e \in o \cap F - e_o$ we know that $F' \cap b_{o,e}$ is a 
coscrawl of $M'$, again by \autoref{rest_scrawl}, so it can't meet $o'$ in just one point. 
But $e_o \in o' \cap F' \cap b_{o,e} \subseteq \{e_o, e\}$ so we must have $o' \cap F' \cap b_{o,e} = \{e_o, e\}$ 
and we conclude that $e \in o'$. Since $e$ was arbitrary, this implies that $o \cap F \subseteq o'$.
Moreover, $o \cap F = o'\cap F$.

Similarly, for each $b \in B$, we find a cocircuit $b'$ of $M'$ such that $e_b \in b' \subseteq F' \cap b$, 
and it follows by a dual argument that $F \cap b = b' \cap F$, completing the proof of the lemma. 
\end{proof}

Since $M'$ is finite, it is representable over $k$. So we can find functions $c_o$ and $d_b$ as in \autoref{mini_char}
for this matroid. Let $c \in (k^*)^H$ be chosen so that, for each $o \in O$ and each $e \in o \cap F$ we have 
$c(o, e) = c_{o'}(e)$, and also so that for each $b \in B$ and each $e \in F \cap b$ we have $c(b, e) = c_{b'}(e)$. 
These choices ensure that $c \in \bigcap_{(o, b) \in K}C_{o, b}$.

Since $(k^*)^H$ is compact, and any finite intersection of the $C_{o,b}$ is nonempty, we have that $\bigcap_{(o, b) \in \Ccal(M) \times \Ccal(M^*)} C_{o,b}$ is nonempty.
As any element in the intersection is a $k$-painting, this completes the proof.
\end{proof}

We note that this gives a uniform way to extend excluded minor characterisations of representability from finite to infinite matroids. For example, we may immediately extend the result of \cite{{bixby:gf3},{seymour:gf3}} as follows:

\begin{cor}
A tame matroid $M$ is a thin sums matroid over $GF(3)$ if and only if it has no minor isomorphic to $U_{2,5}$, $U_{3,5}$, $F_7$ or $F_7^*$.
\end{cor}

\section{Other applications of the method}
\subsection{Regular matroids}

A key definition to prove \autoref{rep} was that of a $k$-painting.
The corresponding notion for regular matroids is as follows.

A \emph{signing} for a matroid $M$ is a choice of a function $c_o \colon o \to \{1,-1\}$
for each circuit $o$ of $M$ and a function $d_b \colon b \to \{1,-1\}$ for each cocircuit $b$ of $M$ such that for any circuit $o$ and cocircuit $b$ we have
$$\sum_{e \in o \cap b} c_o(e)d_b(e) = 0 \, ,$$
where the sum is evaluated over $\Zbb$.
A matroid is \emph{signable} if it has a signing. 

\begin{lem}\label{finsign}[{\cite[Proposition 13.4.5]{Oxley},\cite{MR921067}}]
 Let $M$ be a finite matroid.
Then $M$ is regular if and only if $M$ is signable.
\end{lem}

Using similar ideas to those in the proof of \autoref{rep}, we obtain the following.

\begin{thm}\label{reg}
Let $M$ be a tame matroid. Then the following are equivalent.
\begin{enumerate}
 \item $M$ is a thin sums matroid over every field.
\item $M$ is signable
\item Every finite minor of $M$ is regular.
\end{enumerate}
\end{thm}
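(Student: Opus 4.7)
My proposal is to follow the pattern of the proof of \autoref{rep}, replacing the coefficient set $k^*$ by $\{1,-1\}$, the $k$-arithmetic by $\Zbb$-arithmetic, and the appeal to \autoref{mini_char} by \autoref{finsign}.

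For (1) $\Rightarrow$ (3), the class of tame thin sums matroids over each fixed field is closed under taking minors, so every finite minor of $M$ is representable over every field, hence regular by definition. For (2) $\Rightarrow$ (1), I claim that a signing of $M$ is simultaneously a $k$-painting for every field $k$: embed $\pm 1 \in \Zbb$ into $k^*$ via the unique ring map $\Zbb \to k$ (note $-1 \neq 0$ in any field), and observe that if $\sum_{e \in o \cap b} c_o(e)d_b(e) = 0$ holds in $\Zbb$ (which is meaningful because tameness forces $o \cap b$ to be finite), then its image in $k$ is zero as well. The construction used in the (2) $\Rightarrow$ (1) direction of \autoref{rep} then exhibits $M$ as $M_{ts}(f)$ for $f \colon E \to k^{\Ccal(M^*)}$ sending $e$ to the function $b \mapsto d_b(e)$ if $e \in b$ and to $0$ otherwise, uniformly in $k$.

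The bulk of the work is in (3) $\Rightarrow$ (2), which I handle by a compactness argument parallel to the one in \autoref{rep}. Let $X = \{1,-1\}^H$, with $H$ the same index set as there and each factor given the discrete topology, so $X$ is compact. For each circuit-cocircuit pair $(o,b)$ set
\[
C_{o,b} = \left\{c \in X \;\Big|\; \sum_{e \in o \cap b} c(o,e)\, c(b,e) = 0 \text{ in } \Zbb\right\};
\]
this is closed because $o \cap b$ is finite by tameness, and any element of $\bigcap_{(o,b)} C_{o,b}$ is, by definition, a signing of $M$. To verify the finite intersection property, fix a finite $K \subseteq \Ccal(M) \times \Ccal(M^*)$, let $O, B, F$ be as in the proof of \autoref{rep}, and apply \autoref{finite_minor} to obtain a finite minor $M'$ together with, for each $o \in O$, an $M'$-circuit $o'$ with $o' \cap F = o \cap F$, and similarly for each $b \in B$ an $M'$-cocircuit $b'$ with $b' \cap F = b \cap F$. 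By hypothesis (3) $M'$ is a finite regular matroid, so by \autoref{finsign} it carries a signing $(c_{o'}, d_{b'})$. Defining $c(o,e) = c_{o'}(e)$ for $e \in o \cap F$, $c(b,e) = d_{b'}(e)$ for $e \in b \cap F$, and choosing any values in $\{1,-1\}$ elsewhere, yields a point of $\bigcap_{(o,b) \in K} C_{o,b}$, because for $(o,b) \in K$ we have $o \cap b \subseteq F$.

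The main obstacle is essentially to confirm that the scaffolding of \autoref{rep} is insensitive to the arithmetic setting: the compactness of the cube, the closedness of each $C_{o,b}$, and the gluing of signings of $M'$ into coordinates of $c$ all depend only on the finiteness of each $o \cap b$, not on whether the values live in $k^*$ or in $\{1,-1\}$. The one genuinely new ingredient, namely that finite regular matroids admit signings, is packaged in \autoref{finsign}.
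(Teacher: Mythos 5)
Your proposal is correct and follows essentially the same route as the paper: (2)$\Rightarrow$(1) by viewing a signing as a $k$-painting for every $k$, (1)$\Rightarrow$(3) by minor-closedness of tame thin sums matroids, and (3)$\Rightarrow$(2) by \autoref{finsign} plus a compactness argument modelled on the proof of \autoref{rep}. In fact you spell out the compactness step (the space $\{1,-1\}^H$, the closed sets $C_{o,b}$, and the use of \autoref{finite_minor}) in more detail than the paper, which merely cites the argument of \autoref{rep}.
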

\begin{proof}
(2) implies that $M$ is $k$-paintable for every field $k$, and so implies (1). (1) implies that every finite minor of $M$ is representable over every field, and so is regular, which gives (3). (3) implies that every finite minor of $M$ is signable, by \autoref{finsign}. We may then deduce (2) by a compactness argument like that in the proof of \autoref{rep}.
\end{proof}

Motivated by this theorem, we call a tame matroid \emph{regular} if any of these equivalent conditions hold.

\subsection{Partial fields}
\autoref{reg} is a special case of a more general result extending characterisations of simultaneous representations over multiple fields using partial fields to tame infinite matroids. For some background on partial fields, see \cite{Van_Zwan:thesis}.

A {\em partial field} consists of a pair $(R, S)$, where $R$ is a ring and $S$ is a subgroup of the group of units of $R$ under multiplication, such that $-1 \in S$. In this context, an \emph{$(R,S)$-painting} for a matroid $M$ is a choice of a function $c_o \colon o \to S$ for each circuit $o$ of $M$ and a function $d_b \colon b \to S$ for each cocircuit $b$ of $M$ such that for any circuit $o$ and cocircuit $b$ we have
\begin{equation}\label{sumeq}
 \sum_{e \in o \cap b} c_o(e)d_b(e) = 0 \, .
\end{equation}

For example, for any field $k$ a matroid $M$ is $k$-paintable if and only if it is $(k, k^*)$-paintable, and $M$ is signable if and only if it is $(\Zbb, \{-1, 1\})$-paintable. It is clear that the class of $(R,S)$-paintable matroids is closed under duality and under taking minors. In particular, any finite minor of an $(R,S)$-paintable matroid is $(R,S)$-paintable. The converse follows from an almost identical compactness argument to that used for \autoref{rep}, giving:

\begin{thm}
Let $(R, S)$ be a partial field with $S$ finite. A tame matroid is $(R,S)$-paintable if and only if all its finite minors are.
\end{thm}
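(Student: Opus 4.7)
The plan is to follow the proof of Theorem \ref{rep} almost verbatim, replacing the finite group $k^*$ by the (finite, by hypothesis) group $S$ in the target of the painting functions, and invoking the assumption on finite minors in place of Lemma \ref{mini_char}.

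One direction is immediate: as noted just before the theorem statement, $(R,S)$-paintability is closed under taking minors, so if $M$ is $(R,S)$-paintable then in particular every finite minor of $M$ is $(R,S)$-paintable.

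For the converse, suppose every finite minor of $M$ is $(R,S)$-paintable. I would set up the same compact product space as in the proof of Theorem \ref{rep}: let
\[
H = \Bigl(\bigcup_{o \in \Ccal(M)}\{o\} \times o\Bigr) \amalg \Bigl(\bigcup_{b \in \Ccal(M^*)}\{b\} \times b\Bigr),
\]
topologise $S$ discretely, and give $X = S^H$ the product topology, compact by Tychonoff since $S$ is finite. For each $(o,b) \in \Ccal(M) \times \Ccal(M^*)$, the set
\[
C_{o,b} = \Bigl\{c \in S^H \,\Big|\, \sum_{e \in o \cap b} c(o,e)\, c(b,e) = 0 \text{ in } R \Bigr\}
\]
is closed, because tameness makes $o \cap b$ finite, so the constraint involves only finitely many coordinates of $X$. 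By compactness, it suffices to verify the finite intersection property.

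Given a finite $K \subseteq \Ccal(M) \times \Ccal(M^*)$, extract $O$, $B$ and $F = \bigcup O \cap \bigcup B$ exactly as in the proof of Theorem \ref{rep}, and apply Lemma \ref{finite_minor} to produce a finite minor $M'$ on a ground set $F' \supseteq F$, together with, for each $o \in O$, a circuit $o'$ of $M'$ with $o' \subseteq o$ and $o' \cap F = o \cap F$, and dually for each $b \in B$. By hypothesis $M'$ is $(R,S)$-paintable; fix such a painting $((c'_{o'}),(d'_{b'}))$. Define $c \in S^H$ by setting $c(o,e) = c'_{o'}(e)$ for $o \in O$, $e \in o \cap F$, and $c(b,e) = d'_{b'}(e)$ for $b \in B$, $e \in b \cap F$, and choosing arbitrary values in $S$ at every remaining coordinate. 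For $(o,b) \in K$ one has $o \cap b \subseteq F$; combined with $o' \cap F = o \cap F$ and $b' \cap F = b \cap F$, together with $o' \subseteq o$ and $b' \subseteq b$, this gives $o' \cap b' = o \cap b$. Hence the painting equation at $(o,b)$ evaluated under $c$ reduces exactly to the painting equation at $(o',b')$ in $M'$, which holds by construction. Thus $c \in \bigcap_{(o,b) \in K} C_{o,b}$, and any element of the full intersection $\bigcap_{(o,b)} C_{o,b}$ then defines an $(R,S)$-painting of $M$. No new obstacle arises beyond those already handled in Theorem \ref{rep}: the only roles of $k^*$ in that argument—providing a finite alphabet for the product space and supplying paintings of finite minors—are taken over by $S$ (finite by assumption) together with the minor hypothesis.
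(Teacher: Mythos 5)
Your proof is correct and follows essentially the same route as the paper: the paper proves the easy direction by noting that $(R,S)$-paintability is minor-closed, and obtains the converse by exactly the compactness argument of Theorem~\ref{rep}, with the finite group $S$ replacing $k^*$ and the finite-minor hypothesis replacing Lemma~\ref{mini_char}. Your explicit verification that $o'\cap b'=o\cap b$ (via $o\cap b\subseteq F$ and Lemma~\ref{finite_minor}) is precisely the point the paper leaves implicit, so nothing is missing.
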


It follows from the results of \cite[Section 2.7]{Van_Zwan:thesis} that a finite matroid is $(R,S)$-paintable if and only if it is $(R,S)$-representable. For finite matroids it is known that simultaneous representability over sets of fields corresponds to representability over partial fields, and we are now in a position to lift many such results to all tame matroids. For example, we can lift \cite[Theorem 1.2]{MR1407504} as follows:

\begin{cor}
A tame matroid $M$ is a thin sums matroid over both $\Fbb_3$ and $\Fbb_4$ if and only if it is $(\Cbb, \{\zeta^i | i \leq 6\})$-paintable for $\zeta$ a primitive sixth root of unity.
\end{cor}

\subsection{Ternary matroids}

For finite matroids, a useful property of $\Fbb_3$-representable matroids is the uniqueness of the representations. In this section, we shall prove the corresponding property for tame ternary matroids.

Let $M$ be a $k$-paintable matroid for some field $k$.
We say that two $k$-paintings $((c_o|o\in \Ccal(M)),(d_b|b\in \Ccal(M^*)))$ and 
$((\tilde c_o|o\in \Ccal(M)),(\tilde d_b|b\in \Ccal(M^*)))$ are \emph{equivalent} 
if and only if there are
constants $x(o)$ for every $o\in \Ccal(M)$, constants $x(b)$ for every $b\in \Ccal(M^*)$, constants $x(e)$ for
every edge $e$ and a field automorphism $\varphi$ such that the following are true:

\begin{enumerate}
\item $\tilde c_{o}(e)=\varphi(x(o) x(e) c_o(e))$ for any $e \in o \in \Ccal(M)$.
\item $\tilde d_{b}(e)=\varphi\left(\frac{x(b) d_b(e)}{x(e)}\right)$ for any $e \in b \in \Ccal(M^*)$.
\end{enumerate}

Two signings of the same matroid $M$ are {\em equivalent} if and only if they induce equivalent $\Fbb_3$-paintings of $M$.

Via \autoref{char} for any tame matroid any thin sums representation over $k$ corresponds to a $k$-painting. 
For finite matroids,  the notions of equivalence for representations and paintings coincide: it is straightforward to check that two representations are equivalent iff the corresponding paintings are.
As for finite matroids, we obtain the following.

\begin{thm}\label{uni}
Any two $\Fbb_3$-paintings of the same matroid $M$ are equivalent.
\end{thm}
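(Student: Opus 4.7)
The plan is to run a compactness argument analogous to the proof of \autoref{rep}. Let $P^1 = ((c^1_o), (d^1_b))$ and $P^2 = ((c^2_o), (d^2_b))$ be the two $\Fbb_3$-paintings of $M$ (so $M$ is necessarily tame, since the painting sums must be well-defined). Because $\Fbb_3$ admits only the identity field automorphism, witnessing equivalence of $P^1, P^2$ reduces to producing a function
\[
x \colon \Ccal(M) \sqcup \Ccal(M^*) \sqcup E(M) \to \Fbb_3^*
\]
satisfying $c^2_o(e) = x(o) x(e) c^1_o(e)$ for every $e \in o \in \Ccal(M)$ and $d^2_b(e) x(e) = x(b) d^1_b(e)$ for every $e \in b \in \Ccal(M^*)$. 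I would take the compact space $Y := (\Fbb_3^*)^{\Ccal(M) \sqcup \Ccal(M^*) \sqcup E(M)}$ with the product of discrete topologies; each target equation cuts out a clopen subset of $Y$ depending on only three coordinates, so by Tychonoff it suffices to verify the finite intersection property.

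For the finite intersection, given a finite list of equations involving circuits $O_0$, cocircuits $B_0$ and edges $E_0$, I first enrich $O_0, B_0$ to finite $O \supseteq O_0$, $B \supseteq B_0$ so that every $e \in E_0$ lies in both $\bigcup O$ and $\bigcup B$ — possible because such an $e$ is neither a loop nor a coloop. Letting $F := \bigcup O \cap \bigcup B$, this gives $E_0 \subseteq F$. Now apply \autoref{finite_minor} to obtain a finite minor $M' = M/C \backslash D$ with, for each $o \in O$, an $M'$-circuit $o' \subseteq o$ satisfying $o' \cap F = o \cap F$, and dually for each $b \in B$ an $M'$-cocircuit $b' \subseteq b$ with $b' \cap F = b \cap F$. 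Both $P^i$ induce $\Fbb_3$-paintings $Q^i$ of $M'$ in the sense of \autoref{indpaint}, and we may use the same extending $M$-circuit and $M$-cocircuit representatives for both inductions. Since $M'$ is finite and $\Fbb_3$-paintable, hence $\Fbb_3$-representable, the classical uniqueness of ternary representations (translated into the language of paintings via the remark preceding the theorem) yields constants $y(o'), y(b'), y(e)$ witnessing an equivalence of $Q^1$ and $Q^2$. Defining $x \in Y$ by $x(o) := y(o')$ for $o \in O$, $x(b) := y(b')$ for $b \in B$, $x(e) := y(e)$ for $e \in F$, and arbitrary values elsewhere should then produce a point of $Y$ satisfying the finite list of equations.

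The hard part will be the final transfer of constants: an $M$-circuit $o$ returned by \autoref{finite_minor} need not satisfy the hypothesis $o \subseteq o' \cup C$ of \autoref{unind}, so there is no immediate scalar relating $c^i_o|_{o \cap F}$ to the induced painting value $Q^i(o')|_{o \cap F}$. The remedy is to fix a genuine representative $o^\star$ with $o' \subseteq o^\star \subseteq o' \cup C$ (available by \autoref{rest_cir}), base the induced painting on $o^\star$, and then use the painting equations along $M$-cocircuits extending $M'$-cocircuits (produced via \autoref{o_cap_b} applied inside the finite matroid $M'$) to show that $c^i_o$ and $c^i_{o^\star}$ agree on $o \cap F$ up to a single scalar $\alpha^i_o$; this scalar can then be absorbed by redefining $x(o) := (\alpha^2_o/\alpha^1_o) y(o')$. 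A slight strengthening of \autoref{finite_minor} to ensure $o \cap F' = o'$ (so that circuit-cocircuit intersections in $M$ transfer faithfully to $M'$) may be needed to guarantee that the relevant intersections have exactly size two. After the dual bookkeeping for cocircuits, Tychonoff compactness delivers the required $x$ and establishes the equivalence of $P^1$ and $P^2$.
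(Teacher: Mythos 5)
Your outer strategy --- Tychonoff compactness on $(\Fbb_3^*)^{\Ccal(M)\sqcup\Ccal(M^*)\sqcup E(M)}$, with the finite intersection property checked via \autoref{finite_minor} and unique representability of finite ternary matroids --- is genuinely different from the paper's proof, which uses no compactness at all: there the constants $x(g),x(o),x(b)$ are written down explicitly in terms of a nominated reference edge $g_1$ and nominated circuits $o(g)$ containing $g_1$ and $g$ (after reducing to connected $M$), and each identity is then verified on a finite minor. Your route would work, and even avoids the connectivity reduction, \emph{provided} the transfer step you yourself flag as ``the hard part'' is actually established. As written it is not, and that is a genuine gap: everything in your finite-intersection verification hinges on producing, for each $o\in O$ and each of the two paintings, a single scalar $\lambda$ with $c_o\restric_F=\lambda\, c'_{o'}\restric_F$, where $c'$ is the induced painting of $M'$ (and dually for cocircuits); without it the constants $y$ from the finite uniqueness theorem cannot be converted into values of $x$ satisfying the given equations.

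Your two proposed remedies do not deliver this. The detour through $o^\star$ with $o'\subseteq o^\star\subseteq o'\cup C$ runs into the following problem: to compare $c_o$ with $c_{o^\star}$ (or with $c'_{o'}$) you need an $M$-cocircuit meeting $o$ itself in exactly a pair $\{e_o,e\}$, but an arbitrary $M'$-cocircuit $b'$ with $o'\cap b'=\{e_o,e\}$, extended by deleted edges to $b\subseteq b'\cup D$, only controls $o^\star\cap b$; it may meet $o$ in further elements of $(o\cap F')\setminus o'$, since $o\cap F'$ is in general only a scrawl of $M'$ properly containing $o'$. Your fallback --- strengthening \autoref{finite_minor} so that $o\cap F'=o'$ --- is not proved and fails for the construction as given (contracting $C$ can create extra circuits inside $o\cap F'$: in a graphic matroid, contracting a path of $C$ joining two vertices of a long circuit splits its trace into two circuits), so the finite intersection property is left resting on an unsubstantiated claim. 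The repair is to use the cocircuits $b_{o,e}$ that are \emph{already} chosen in the proof of \autoref{finite_minor}: they satisfy $o\cap b_{o,e}=\{e_o,e\}$ and $b_{o,e}\setminus F'\subseteq D$, so by the dual of \autoref{rest_scrawl} there is an $M'$-cocircuit $b'$ with $e_o\in b'\subseteq b_{o,e}$, hence $o'\cap b'=\{e_o,e\}$; the representative $b$ of $b'$ used by the induced painting (as in \autoref{indpaint}) satisfies $b'\subseteq b\subseteq b'\cup D$ and $o\cap D=\emptyset$, so $o\cap b=\{e_o,e\}$, and the two painting identities at $(o,b)$ and $(o',b')$ yield the proportionality on $F$ directly, with no need for $o\subseteq o'\cup C$, for $o^\star$, or for any strengthening. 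This is precisely the sublemma the paper proves inside its argument; with it (and its dual), your compactness wrapper goes through, modulo the small caveat that loops and coloops in $E_0$ must be treated separately, since such edges cannot be covered by both a circuit and a cocircuit in your enrichment step.
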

\begin{proof}
$M$, being $\Fbb_3$-paintable, must be tame. Without loss of generality we may
also assume that $M$ is connected and has more than one edge. Thus any edges $e$
and $f$ of $M$ lie on a common circuit\footnote{In Section 3 of \cite{MR2950489}, it is shown that the relation `$e$ is in a
common circuit with $f$' is indeed an equivalence relation for infinite
matroids.}. We nominate a particular edge $g_1$, and for each other edge $g$ we
nominate a circuit $o(g)$ containing both $g_1$ and $g$. We also nominate for
each circuit $o$ of $M$ an edge $e(o) \in o$ and for each cocircuit $b$ of $M$
an edge $e(b) \in b$.

We denote the two $\Fbb_3$-paintings 
$((c_o|o\in \Ccal(M)),(d_b|b\in \Ccal(M^*)))$ and 
$((\tilde c_o|o\in \Ccal(M)),(\tilde d_b|b\in \Ccal(M^*)))$. We shall construct
witnesses to the equivalence as in the definition above.
Since every automorphism of $\Fbb_3$ is trivial, we shall take $\varphi$ to be the identity.

We now set $x(g) = \frac{\tilde c_{o(g)}(g) c_{o(g)}(g_1)}{\tilde c_{o(g)}(g_1)
c_{o(g)}(g)}$ for each $g \in E$, $x(o) = \frac{\tilde
c_o(e(o))}{x(e(o))c_o(e(o))}$ for each circuit $o$ of $M$ and $x(b) =
\frac{x(e(b)) \tilde d_b(e(b))}{d_b(e(b))}$ for each cocircuit $b$ of $M$.

In order to prove that these values satisfy (1) at a particular circuit $o$ and
$g \in o$, let $O = \{o, o(g), o(e(o))\}$ and $F = \{g, g_1, e(o)\}$ and use the
construction from the proof of \autoref{finite_minor} to obtain a finite minor
$M'=M/C\backslash D$ such that
for every $o\in O$ there is an $M'$-circuit $o'\subseteq o$ such that $o'\cap F=o\cap F$ and 
for every $b\in B$ there is an $M'$-cocircuit $b' \subseteq b$  such that $b'\cap F=b\cap F$.

Let $((c'_o | o \in \Ccal(M')), (d'_b | b \in \Ccal(M'^*))$ be the $\Fbb_3$-painting of $M'$ induced by $((c_o|o\in \Ccal(M)),(d_b|b\in \Ccal(M^*))$, and $((\tilde c'_o | o \in \Ccal(M')), (\tilde d'_b | b \in \Ccal(M'^*))$ that induced by $((\tilde c_o|o\in \Ccal(M)),(\tilde d_b|b\in \Ccal(M^*)))$.

By uniqueness of representation for finite matroids, we can find constants
$x'(o')$ for every $o'\in \Ccal(M')$, constants $x'(b')$ for every $b'\in
\Ccal(M'^*)$ and constants $x'(g)$ for
every $g \in X$ such that

\begin{enumerate}\setcounter{enumi}{2}
\item $\tilde c'_{o'}(g)=x'(o') x'(g) c'_{o'}(g)$ for any $g \in o' \in
\Ccal(M')$.
\item $\tilde d'_{b'}(g)=\frac{x'(b') d'_{b'}(g)}{x'(g)}$ for any $g \in b' \in
\Ccal(M'^*)$.
\end{enumerate}

\begin{lem}
For each $o \in O$ there is $\lambda_o \in k^*$ such that 
\begin{enumerate}\setcounter{enumi}{4}
\item $c_o \restric_F = \lambda_o c'_{o'}\restric_F$
\end{enumerate}
\end{lem}
\begin{proof}
As part of the construction of $M'$, we picked a canonical element $e_o$ of
$o'$. Let $\lambda = \frac{c_o(e_o)}{c'_{o'}(e_o)}$. For any other $e \in o'
\cap F$, there is by construction a cocircuit $b_{o,e}$ of $M$ with $o \cap
b_{o,e} = \{e_o, e\}$. Then by the dual of \autoref{rest_scrawl} $b_{o,e} \cap
E(M')$ is a coscrawl of $M'$, and so there is a cocircuit $b'$ of $M'$ with $e_o
\in b' \subseteq b_{o,e}$, and so $e_o \in o' \cap b'
\subseteq \{e_o, e\}$. Since $o'$
and $b'$ can't meet in only one element, $e \in b'$. Since the painting of $M'$
is induced from that of $M$, there is a cocircuit $b$ of $M$ such that
$d_b(e)=d'_{b'}(e)$ for all $e\in E(M')$ and $b' \subseteq b \subseteq b' \cup
D$, and so $o \cap b = \{e_o, e\}$. Using the identities in
the definition of painting, we deduce that
$$c_o(e_o)d_{b}(e_o) + c_o(e)d_{b}(e) = 0 \quad\mbox{ and } \quad c'_{o'}(e_o)d'_{b'}(e_o) + c'_{o'}(e)d'_{b'}(e) = 0$$
and so 
$$c_o(e) = -\frac{c_o(e_o)d_{b}(e_o)}{d_{b}(e)} = -\frac{\lambda c'_{o'}(e_o)
d'_{b'}(e_o)}{d'_{b'}(e)} = \lambda c'_{o'}(e)$$
which gives the desired result, since $e \in o' \cap F$ was arbitrary.
\end{proof}

Similarly, we can find constants $\tilde \lambda_o$ for each $o \in O$ such that
\begin{enumerate}\setcounter{enumi}{5}
\item $\tilde c_o \restric_{F} = \tilde \lambda_o \tilde c'_{o'}\restric_{F}$ 
\end{enumerate}

 Now we must simply unwind all the algebraic relationships to obtain the desired result.

\[
 x(g)
= \frac{\tilde c_{o(g)}(g) c_{o(g)}(g_1)}{\tilde c_{o(g)}(g_1) c_{o(g)}(g)}
= \frac{\tilde c'_{o(g)'}(g) c'_{o(g)'}(g_1)}{\tilde c'_{o(g)'}(g_1) c'_{o(g)'}(g)}
= \frac{x'(o(g)') x'(g)}{x'(o(g)')x'(g_1)}= \frac{x'(g)}{x'(g_1)}
\]

where the first equation follows from the definitions, the second from (5) and (6) and the third from (3).
Similarly, we get:

\[
 x(o)
= \frac{\tilde c_{o}(e(o))}{x(e(o))c_{o}(e(o))}
= \frac{\tilde \lambda_{o}}{\lambda_{o}} \frac{\tilde c'_{o'}(e(o))}{x(e(o))c'_{o'}(e(o))}
= \frac{\tilde \lambda_{o}}{\lambda_{o}} \frac{x'(o')x'(e(o))}{x(e(o))}
\]

And finally:

\[
x(o)x(g)c_o(g) 
= \frac{\tilde \lambda_{o}}{\lambda_{o}} \frac{x'(o')x'(e(o)) x'(g_1)}{x'(e(o))}\frac{x'(g)}{x'(g_1)}c_o(g)
= \frac{\tilde \lambda_{o}}{\lambda_{o}}x'(o')x'(g)c_o(g)
\]

Now the last term is just $\tilde c_o(g)$ by first applying (5) and then (3). This completes the proof of the above assignment satisfies (1).
The proof that it also satisfies (2) is similar.

\end{proof}
As every tame regular matroid is a thin sums matroid over $\Fbb_3$,
 it also has a unique representation.
In particular the finite cycle matroid, the algebraic cycle matroid and the topological cycle matroid of a given graph (and their duals) have a unique signing. 

In what follows, we will describe this signing of the finite cycle matroid of a given graph $G$ --- the other cases are similar. First direct the edges of $G$ in an arbitrary way.
To define the functions $c_o$, let $o$ be some cycle of $G$.
Pick a cyclic order of $o$.
For $e\in o$, let $c_o(e)=1$ if $e$ is directed according to the cyclic order of $o$ and $-1$ otherwise.

Next, let $b$ be some cocircuit. By minimality of the cocircuit, it is contained in a single component of $G$ and its removal separates this component into two components, say $C_1(b)$ and $C_2(b)$. Note that every edge in $b$ has precisely one endvertex in each of these components.
For $e\in b$, let $d_b(e)=1$ if $e$ points to a vertex in $C_1$ and $-1$ otherwise.

It remains to check that $\sum_{e\in o\cap b} c_o(e) d_b(e)=0$ for all circuits $o$ and cocircuits $b$. As every circuit is finite, the above sum is finite. Since the directions we gave to the edges of $G$ do not 
influence the values of the products $c_o(e) d_b(e)$, we may assume without loss of generality that in the bond $b$ all edges are directed from $C_1(b)$ to $C_2(b)$. So we get a summand of $+1$ for each edge along which $o$ traverses $b$ from $C_1(b)$ to $C_2(b)$ and a summand of $-1$ for each edge along which $o$ traverses $b$ from $C_2(b)$ to $C_1(b)$. Since $o$ must traverse $b$ the same number of times in each direction, the sum evaluates to 0.

Let us look at how to modify the above construction to make it work for the 
algebraic cycle matroid and the topological cycle matroid instead.
Finite circuits in the algebraic cycle matroid may be dealt with as before. To define $c_o$ for a double ray $o$, we pick an orientation of $o$ and let $c_o(e)$ be $1$ if $e$ is directed in agreement with this orientation and $-1$ otherwise. The above argument still applies: using the tameness of the algebraic cycle matroid, we obtain that a double ray can cross a skew cut only finitely many times, and both tails of the double ray must lie on the same side (as one side is rayless), so the double ray must cross the skew cut the same number of times in each direction.

Using the fact that topological circles are homeomorphic to the unit circle, we get a cyclic order on each circuit of the topological cycle matroid and the above construction again gives us a signing.
\bibliographystyle{plain}
\bibliography{literatur}

\end{document}